\author{Paul \textsc{Poncet}}
\address{CMAP, \'{E}cole Polytechnique, Route de Saclay, 91128 Palaiseau Cedex, France} 
\email{poncet@cmap.polytechnique.fr}
\def\twoheaddownarrow{\rlap{$\downarrow$}\raise-.5ex\hbox{$\downarrow$}}%%
\def\twoheaduparrow{\rlap{$\uparrow$}\raise.5ex\hbox{$\uparrow$}}%%
\newtheorem*{theorem*}{Theorem}
\newtheorem{theorem}{Theorem}[section]
\newtheorem{corollary}[theorem]{Corollary}
\newtheorem{proposition}[theorem]{Proposition}
\theoremstyle{definition}
\begin{document}

\title{Galois connections between closure spaces}

\date{\today}

\subjclass[2010]{06A11, % Algebraic aspects of posets
                 06A15} %Galois correspondences, closure operators (in relation to ordered sets)

\keywords{poset, qoset, closure space, closure operator, Galois connection}

\begin{abstract}
We extend Galois connections between posets to Galois connections between closure spaces, and prove a characterization theorem.
\end{abstract}

\maketitle
%\tableofcontents

%%%%%%%%%%%%%%%%%%%%%%
%%%%%%%%%%%%%%%%%%%%%%
%%%%%%%%%%%%%%%%%%%%%%
%%%%%%%%%%%%%%%%%%%%%%
\section{Introduction}

%\todo{ Quelles considerations fonctorielles ? }

%\todo{ Peut-on encore davantage generaliser ? preclosure spaces? }

Galois connections were introduced by Ore \cite{Ore44} and have proved useful in a wide variety of mathematical areas. 
While Galois connections play on the ground of posets (or more generally of quasiordered sets or \textit{qosets}), we extend this notion to that of closure spaces.

%%%%%%%%%%%%%%%%%%%%%%
%%%%%%%%%%%%%%%%%%%%%%
%%%%%%%%%%%%%%%%%%%%%%
%%%%%%%%%%%%%%%%%%%%%%
\section{Qosets, Galois connections, and closure spaces}

A \textit{quasiordered set} or \textit{qoset} $(P,\leqslant)$ is a set $P$ together with a reflexive and transitive binary relation $\leqslant$. 
If in addition $\leqslant$ is antisymmetric, then $(P, \leqslant)$ is a \textit{partially ordered set} or \textit{poset}. 

A \textit{Galois connection between qosets} $P$ and $P'$ is a pair $(\varphi, \psi)$ of maps $\varphi : P \to P'$ and $\psi : P' \to P$ such that $\varphi(x) \leqslant x'$ if and only if $x \leqslant \psi(x')$ for all $x \in P$, $x' \in P'$. 
The map $\varphi$ (resp.\ $\psi$) is the \textit{left adjoint} (resp.\ \textit{right adjoint}) of the Galois connection. 
We refer the reader to Ern\'{e} et al.\ \cite{Erne93} for usual properties, multiple examples, and various references on Galois connections. 
Note the following properties: 
\begin{itemize}
  \item $\varphi$ and $\psi$ are order-preserving;
  \item $x \leqslant \psi(\varphi(x))$ for all $x \in P$;
  \item $x' \geqslant \varphi(\psi(x'))$ for all $x' \in P'$;
  \item $x = \psi(\varphi(x))$ for all $x \in P$ iff $\varphi$ is injective iff $\psi$ is surjective;
  \item $x' = \varphi(\psi(x'))$ for all $x' \in P'$ iff $\psi$ is injective iff $\varphi$ is surjective. 
\end{itemize} 

A \textit{closure space} $(E, [\cdot])$ is a set $E$ equipped with a map $[\cdot] : 2^E \to 2^E$ such that ${ A } \subseteq { [[A]] } \subseteq { [B] }$, for all subsets $A$, $B$ of $E$ such that ${ A } \subseteq { B }$. 
The map $[\cdot]$ is called a \textit{closure operator}. 
A subset $F$ of $E$ is \textit{$[\cdot]$-closed} (or simply \textit{closed} if the context is clear) if $[F] = F$. 
%A subset $G$ of $E$ is \textit{$[\cdot]$-open} (or simply \textit{open} if the context is clear) if $E \setminus G$ is closed. 
Note that the empty set $\emptyset$ is not assumed to be closed in general. 
Recall that the set of closed subsets of a closure space is a \textit{Moore family}, in the sense that it is stable under arbitrary intersections (hence is a complete lattice). 

A map $f : E \to E'$ between two closure spaces $E$ and $E'$ is \textit{continuous} if ${ f([A]) } \subseteq { [f(A)] }$, for all subsets $A$ of $E$. 
An equivalent condition is that $f^{-1}(F')$ be closed in $E$ for every closed subset $F'$ of $E'$. 

Every closure space $(E, [\cdot])$ induces a qoset $(E, \leqslant)$, where the quasiorder $\leqslant$ is defined by $x \leqslant y$ if $x \in [y]$, where we write $[y]$ instead of $[\{ y \}]$. 
The quasiorder $\leqslant$ is called the \textit{specialization order} on $(E, [\cdot])$. 
Every qoset $(P, \leqslant)$ induces a closure space $(P, \downarrow\!\! \cdot)$, where the closure operator $\downarrow\!\! \cdot$ is defined by $\downarrow\!\! A := \{ x \in P : \exists a \in A, x \leqslant a \}$. 
The subset $\downarrow\!\! A$ is called the \textit{lower set} generated by $A$, and the closure operator $\downarrow\!\! \cdot$ is called the \textit{Alexandrov closure operator}. 
 
We refer the reader to Ern\'e \cite{Erne09} for more on closure spaces. 

%%%%%%%%%%%%%%%%%%%%%%
%%%%%%%%%%%%%%%%%%%%%%
%%%%%%%%%%%%%%%%%%%%%%
%%%%%%%%%%%%%%%%%%%%%%
\section{Galois connections between closure spaces}

We define a \textit{Galois connection between closure spaces} $E$ and $E'$ as a pair $(\varphi, \psi)$ of maps $\varphi : E \to E'$ and $\psi : E' \to E$ such that
\[
{ \varphi^{-1}([A']) } = { [\psi(A')] },
\]
for all subsets $A'$ of $E'$. 

The following result shows that the previous definition indeed generalizes the usual notion of Galois connection.

\begin{proposition}\label{prop:gene}
If $(\varphi, \psi)$ is a Galois connection between qosets $P$ and $P'$, then $(\varphi, \psi)$ is a Galois connection between the closure spaces $P$ and $P'$ equipped with their respective Alexandrov closure operators. 
\end{proposition}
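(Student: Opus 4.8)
The plan is to verify the defining identity of a Galois connection between closure spaces directly, namely that $\varphi^{-1}(\downarrow\!\! A') = \downarrow\!\! \psi(A')$ for every subset $A'$ of $P'$, where $\downarrow\!\!\cdot$ denotes the Alexandrov closure operator on each of the two qosets. Since both sides are subsets of $P$, I would not prove two inclusions separately but instead argue through a single chain of equivalences characterizing membership of an arbitrary element $x \in P$ in each side.

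First I would unwind the left-hand side using only the definitions introduced in the previous section. By definition of the preimage, $x \in \varphi^{-1}(\downarrow\!\! A')$ means $\varphi(x) \in \downarrow\!\! A'$, which by definition of the Alexandrov closure operator on $P'$ means that there exists $a' \in A'$ with $\varphi(x) \leqslant a'$. The key step is then to invoke the adjunction: the defining property of the Galois connection between the qosets $P$ and $P'$ gives $\varphi(x) \leqslant a'$ if and only if $x \leqslant \psi(a')$. Substituting this equivalence, I obtain that $x \in \varphi^{-1}(\downarrow\!\! A')$ if and only if there exists $a' \in A'$ with $x \leqslant \psi(a')$.

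Finally I would unwind the right-hand side. By definition of the Alexandrov closure operator on $P$, we have $x \in \downarrow\!\! \psi(A')$ if and only if there exists $b \in \psi(A')$ with $x \leqslant b$, and since $\psi(A') = \{ \psi(a') : a' \in A' \}$ this is precisely the condition that there exists $a' \in A'$ with $x \leqslant \psi(a')$. This matches the condition derived for the left-hand side, so the two sets have the same elements and the required identity follows.

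I expect no genuine obstacle here: the statement is essentially a bookkeeping exercise translating the pointwise adjunction $\varphi(x) \leqslant a' \iff x \leqslant \psi(a')$ into the set-level equality defining a Galois connection between closure spaces. The only point deserving care is the handling of the existential quantifier over $a' \in A'$, which is preserved across the adjunction step because the equivalence holds for each fixed $a'$ separately; consequently applying it under the existential quantifier is legitimate and the two membership conditions coincide verbatim.
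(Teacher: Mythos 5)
Your proposal is correct and follows essentially the same argument as the paper: a single chain of equivalences for membership of an arbitrary $x \in P$, with the adjunction $\varphi(x) \leqslant a' \Leftrightarrow x \leqslant \psi(a')$ applied under the existential quantifier over $a' \in A'$. No differences worth noting.
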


\begin{proof}
Let $x \in P$. 
Then 
\begin{align*}
{ x \in { \varphi^{-1}(\downarrow\!\! A') } } &\Leftrightarrow { \varphi(x) \in { \downarrow\!\! A' } } \Leftrightarrow { \exists a' \in A' : \varphi(x) \leqslant a' } \\
&\Leftrightarrow { \exists a' \in A : x \leqslant \psi(a') } \Leftrightarrow { x \in { \downarrow\!\! \psi(A') } }.
\end{align*} 
This shows that ${ \varphi^{-1}(\downarrow\!\! A') } = { \downarrow\!\! \psi(A') }$. 
\end{proof}

The next result gives a converse statement to Proposition~\ref{prop:gene} as a special case.

\begin{proposition}
If $(\varphi, \psi)$ is a Galois connection between closure spaces $E$ and $E'$, then $(\varphi, \psi)$ is a Galois connection between the qosets $E$ and $E'$ equipped with their respective specialization orders. 
\end{proposition}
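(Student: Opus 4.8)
The plan is to reduce the defining identity of a Galois connection between closure spaces to singletons and then simply unwind the two specialization orders. Recall that on a closure space the specialization order is given by $a \leqslant b$ if $a \in [b]$, so the Galois connection condition between the qosets $E$ and $E'$ that I must establish reads: $\varphi(x) \in [x']$ if and only if $x \in [\psi(x')]$, for all $x \in E$ and all $x' \in E'$.

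First I would apply the hypothesis ${ \varphi^{-1}([A']) } = { [\psi(A')] }$ to the singleton $A' = \{ x' \}$. Since $\psi(\{ x'\}) = \{ \psi(x') \}$, this yields ${ \varphi^{-1}([x']) } = { [\psi(x')] }$, where, as in the text, $[x']$ and $[\psi(x')]$ abbreviate $[\{x'\}]$ and $[\{\psi(x')\}]$.

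Next I would chase an arbitrary element $x \in E$ through both sides of this set equality. On the one hand, $x \in \varphi^{-1}([x'])$ holds if and only if $\varphi(x) \in [x']$, which is exactly $\varphi(x) \leqslant x'$ for the specialization order on $E'$. On the other hand, $x \in [\psi(x')]$ is exactly $x \leqslant \psi(x')$ for the specialization order on $E$. Equality of the two sets then furnishes the equivalence $\varphi(x) \leqslant x'$ if and only if $x \leqslant \psi(x')$, which is precisely the defining property of a Galois connection between the qosets $E$ and $E'$.

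I do not anticipate any genuine obstacle here: the argument is a direct specialization of the hypothesis followed by a purely definitional rewriting. The only point deserving care is the passage to singletons, namely that it suffices to test the defining identity on sets of the form $\{x'\}$ and that $\psi$ sends such a singleton to the singleton $\{\psi(x')\}$. Everything else is a formal chain of equivalences mirroring the computation already carried out in the proof of Proposition~\ref{prop:gene}, only read in the opposite direction.
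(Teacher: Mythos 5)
Your proposal is correct and is essentially the paper's own proof: both specialize the identity $\varphi^{-1}([A']) = [\psi(A')]$ to the singleton $A' = \{x'\}$ and then unwind the two specialization orders via the chain $\varphi(x) \leqslant x' \Leftrightarrow \varphi(x) \in [x'] \Leftrightarrow x \in \varphi^{-1}([x']) \Leftrightarrow x \in [\psi(x')] \Leftrightarrow x \leqslant \psi(x')$. No gaps.
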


\begin{proof}
Let $x \in E$, $x' \in E'$. 
Then 
\begin{align*}
{ \varphi(x) \leqslant x' } &\Leftrightarrow { \varphi(x) \in { [x'] } } \Leftrightarrow { x \in { \varphi^{-1}([x']) } } \\
&\Leftrightarrow { x \in [\psi(x')] } \Leftrightarrow { x \leqslant \psi(x') }.
\end{align*} 
So $(\varphi, \psi)$ is a Galois connection between the qosets $E$ and $E'$. 
\end{proof}

We now come to our characterization theorem.
For a closure space $E$, we write $\mathrsfs{F}(E)$ for the poset made of its closed subsets, ordered by inclusion. 

\begin{theorem}\label{thm:main}
Let $E$, $E'$ be closure spaces, and let $\varphi : E \to E'$, $\psi : E' \to E$. 
Then the following are equivalent:
\begin{enumerate}

  \item\label{thm:main1} $(\varphi, \psi)$ is a Galois connection between the closure spaces $E$ and $E'$;
  \item\label{thm:main2} $\varphi$ and $\psi$ are continuous and $(\varphi, \psi)$ is a Galois connection between the qosets $E$ and $E'$;
  \item\label{thm:main3} $\varphi$ and $\psi$ are continuous and $(\varphi^{-1}, \psi^{-1})$ is a Galois connection between the posets $\mathrsfs{F}(E')$ and $\mathrsfs{F}(E)$.
\end{enumerate}
\end{theorem}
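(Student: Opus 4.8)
The plan is to prove the three conditions equivalent cyclically, as (\ref{thm:main1}) $\Rightarrow$ (\ref{thm:main2}) $\Rightarrow$ (\ref{thm:main3}) $\Rightarrow$ (\ref{thm:main1}). Throughout, the workhorse is one elementary observation about the specialization order: whenever $F$ is closed and $z \in F$, monotonicity and idempotence give $[z] \subseteq [F] = F$, so membership in a closed set propagates downward in $\leqslant$. Together with the unit and counit inequalities $x \leqslant \psi(\varphi(x))$ and $\varphi(\psi(x')) \leqslant x'$ furnished by the qoset Galois connection, this lets me convert facts about $\leqslant$ (which only sees singletons) into facts about arbitrary closed sets.

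For (\ref{thm:main1}) $\Rightarrow$ (\ref{thm:main2}), the qoset Galois connection is exactly the content of the preceding proposition, so only continuity remains. Continuity of $\varphi$ is immediate, since for closed $F'$ the defining identity gives $\varphi^{-1}(F') = \varphi^{-1}([F']) = [\psi(F')]$, which is closed. For continuity of $\psi$ I would check $\psi([B']) \subseteq [\psi(B')]$ for every $B' \subseteq E'$: given $y' \in [B']$, the counit yields $\varphi(\psi(y')) \in [y'] \subseteq [B']$, so $\psi(y') \in \varphi^{-1}([B']) = [\psi(B')]$.

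For (\ref{thm:main2}) $\Rightarrow$ (\ref{thm:main3}), continuity is precisely what makes $\varphi^{-1}$ and $\psi^{-1}$ restrict to maps $\mathrsfs{F}(E') \to \mathrsfs{F}(E)$ and $\mathrsfs{F}(E) \to \mathrsfs{F}(E')$, so it remains to verify the adjunction $\varphi^{-1}(F') \subseteq F \Leftrightarrow F' \subseteq \psi^{-1}(F)$ for closed $F'$, $F$. Since $F' \subseteq \psi^{-1}(F)$ is just $\psi(F') \subseteq F$, I would argue the two implications directly: if $\psi(F') \subseteq F$ and $\varphi(x) \in F'$, the unit gives $x \in [\psi(\varphi(x))] \subseteq F$; conversely, if $\varphi^{-1}(F') \subseteq F$ and $y' \in F'$, the counit gives $\varphi(\psi(y')) \in [y'] \subseteq F'$, whence $\psi(y') \in \varphi^{-1}(F') \subseteq F$.

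For (\ref{thm:main3}) $\Rightarrow$ (\ref{thm:main1}), I first extract from the adjunction, for each closed $F'$, the identity $\varphi^{-1}(F') = [\psi(F')]$: testing the equivalence $\varphi^{-1}(F') \subseteq F \Leftrightarrow \psi(F') \subseteq F$ at $F = \varphi^{-1}(F')$ and at $F = [\psi(F')]$ shows these two closed sets have the same closed upper bounds, hence coincide. Applying this to $F' = [A']$ gives $\varphi^{-1}([A']) = [\psi([A'])]$, and I then collapse $[\psi([A'])]$ to $[\psi(A')]$ using continuity of $\psi$, which yields $\psi([A']) \subseteq [\psi(A')]$ and therefore equality. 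This last passage from closed sets to an arbitrary subset $A'$ is the step I expect to be the real obstacle, and the reason the theorem is not a mere transcription of the qoset case: because the closure operators are arbitrary rather than Alexandrov, the specialization order records only the behaviour of $[\cdot]$ on singletons, so the qoset Galois connection alone cannot control $\varphi^{-1}([A'])$ for a non-closed $A'$; it is continuity of $\psi$ that bridges the gap, and tracking exactly where each continuity hypothesis enters is the main bookkeeping burden of the proof.
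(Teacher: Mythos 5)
Your proof is correct and follows essentially the same route as the paper's: the same cyclic scheme \eqref{thm:main1}$\Rightarrow$\eqref{thm:main2}$\Rightarrow$\eqref{thm:main3}$\Rightarrow$\eqref{thm:main1}, with the first two implications argued identically via the unit/counit inequalities and closedness. The only cosmetic difference is in \eqref{thm:main3}$\Rightarrow$\eqref{thm:main1}, where you first establish $\varphi^{-1}(F') = [\psi(F')]$ for closed $F'$ and then pass to arbitrary $A'$ via $[\psi([A'])] = [\psi(A')]$, whereas the paper runs a single chain of equivalences ${\varphi^{-1}([A'])} \subseteq F \Leftrightarrow \cdots \Leftrightarrow {[\psi(A')]} \subseteq F$ over all closed $F$ --- the same ``equal closed upper bounds'' idea, with continuity of $\psi$ merely entering one step earlier.
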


\begin{proof}
\eqref{thm:main1} $\Rightarrow$ \eqref{thm:main2}: 
We already know from the previous proposition that $(\varphi, \psi)$ is a Galois connection between the qosets $E$ and $E'$. 
Let us show that $\varphi$ and $\psi$ are continuous. 
The continuity of $\varphi$ is clear, since ${ \varphi^{-1}(F') } = { [\psi(F')] }$ is closed in $E$ for every closed subset $F'$ of $E'$. 
To prove the continuity of $\psi$, let $x \in \psi([A'])$. 
Then $x = \psi(x')$ for some $x' \in [A']$. 
This implies that $\varphi(x) = \varphi(\psi(x')) \leqslant x'$, so that $\varphi(x) \in { [x'] } \subseteq { [A'] }$. 
So $x \in { \varphi^{-1}([A']) } = { [\psi(A')] }$. 
We have proved that ${ \psi([A']) } \subseteq { [\psi(A')] }$, for all ${ A' } \subseteq { E' }$, so $\psi$ is continuous. 

\eqref{thm:main2} $\Rightarrow$ \eqref{thm:main3}: 
Let $F \in \mathrsfs{F}(E)$, $F' \in \mathrsfs{F}(E')$. 
Suppose first that ${ \varphi^{-1}(F') } \subseteq { F }$, and let $x' \in F'$. 
Since $\varphi(\psi(x')) \leqslant x'$, we have $\varphi(\psi(x')) \in F'$.
Thus, $\psi(x') \in { \varphi^{-1}(F') } \subseteq { F }$, so $x' \in \psi^{-1}(F)$. 
This shows that ${ F' } \subseteq { \psi^{-1}(F) }$.
Conversely, suppose that ${ F' } \subseteq { \psi^{-1}(F) }$, and let $x \in \varphi^{-1}(F')$. 
Then $\varphi(x) \in { F' } \subseteq { \psi^{-1}(F) }$, so $x \leqslant \psi(\varphi(x)) \in F$. 
This shows that ${ \varphi^{-1}(F') } \subseteq { F }$. 
We have proved that ${ \varphi^{-1}(F') } \subseteq { F }$ if and only if ${ F' } \subseteq { \psi^{-1}(F) }$, for all $F \in \mathrsfs{F}(E)$, $F' \in \mathrsfs{F}(E')$, so $(\varphi^{-1}, \psi^{-1})$ is indeed a Galois connection between the posets $\mathrsfs{F}(E')$ and $\mathrsfs{F}(E)$.

\eqref{thm:main3} $\Rightarrow$ \eqref{thm:main1}: 
If $F \in \mathrsfs{F}(E)$, then 
${ \varphi^{-1}([A']) } \subseteq { F } \Leftrightarrow { [A'] } \subseteq { \psi^{-1}(F) } \Leftrightarrow { A' } \subseteq { \psi^{-1}(F) } \Leftrightarrow { \psi(A') } \subseteq { F } \Leftrightarrow { [\psi(A')] } \subseteq { F }$, where these equivalences use the hypotheses that $\varphi$ and $\psi$ are continuous and $(\varphi^{-1}, \psi^{-1})$ is a Galois connection between $\mathrsfs{F}(E')$ and $\mathrsfs{F}(E)$. 
This shows that ${ \varphi^{-1}([A']) } = { [\psi(A')] }$. 
\end{proof}

\begin{corollary}
If $E$ and $E'$ are $T_1$ topological spaces, equipped with their respective topological closure operators, then $(\varphi, \psi)$ is a Galois connection between $E$ and $E'$ if and only if $\varphi$ and $\psi$ are homeomorphisms inverse of each other.
\end{corollary}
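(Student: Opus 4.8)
The plan is to reduce the statement to Theorem~\ref{thm:main}, specifically the equivalence \eqref{thm:main1}~$\Leftrightarrow$~\eqref{thm:main2}, and then to exploit the special shape of the specialization order in a $T_1$ space. The crucial observation is that in a $T_1$ space every singleton is closed, so $[x] = \{x\}$ for every point $x$; hence the specialization order collapses to equality, that is, $x \leqslant y$ holds if and only if $x = y$, and likewise on $E'$. I would record this reduction first, since it is the only genuinely topological input to the argument.

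Granting this, Theorem~\ref{thm:main} tells us that $(\varphi,\psi)$ is a Galois connection between the closure spaces $E$ and $E'$ if and only if $\varphi$ and $\psi$ are continuous and $(\varphi,\psi)$ is a Galois connection between the qosets $E$ and $E'$. Because both specialization orders are equality, the qoset adjunction condition ``$\varphi(x) \leqslant x'$ if and only if $x \leqslant \psi(x')$'' becomes simply ``$\varphi(x) = x'$ if and only if $x = \psi(x')$''. From this bi-implication I would extract that $\varphi$ and $\psi$ are mutually inverse bijections: taking $x' = \varphi(x)$ yields $x = \psi(\varphi(x))$ for all $x \in E$, so $\psi \circ \varphi = \mathrm{id}_E$, and taking $x = \psi(x')$ yields $\varphi(\psi(x')) = x'$ for all $x' \in E'$, so $\varphi \circ \psi = \mathrm{id}_{E'}$. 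Combined with the continuity supplied by the theorem, this shows $\varphi$ and $\psi$ are homeomorphisms, inverse of each other.

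For the converse, suppose $\varphi$ and $\psi$ are mutually inverse homeomorphisms. They are then continuous, and since $\psi = \varphi^{-1}$ the equivalence ``$\varphi(x) = x'$ if and only if $x = \psi(x')$'' holds trivially; in view of the collapse of both specialization orders to equality, this is precisely the statement that $(\varphi,\psi)$ is a Galois connection between the qosets $E$ and $E'$. Applying Theorem~\ref{thm:main} once more delivers the desired conclusion.

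I do not anticipate a real obstacle: the argument is essentially mechanical once the main theorem is in hand. The one step deserving care is the very first, namely verifying that the $T_1$ hypothesis forces singletons to be closed and thereby trivializes the specialization order; everything else follows by feeding this into the equivalence \eqref{thm:main1}~$\Leftrightarrow$~\eqref{thm:main2} and reading off that an adjunction between two sets carrying the equality order is exactly a pair of mutually inverse bijections.
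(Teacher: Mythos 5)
Your argument is correct and is surely the intended one: the paper states this corollary without proof, and the natural route is exactly yours --- observe that the $T_1$ hypothesis makes every singleton closed so that both specialization orders collapse to equality, note that a Galois connection between qosets ordered by equality is precisely a pair of mutually inverse bijections, and then invoke the equivalence \eqref{thm:main1}~$\Leftrightarrow$~\eqref{thm:main2} of Theorem~\ref{thm:main} together with the fact that closure-space continuity for topological closure operators coincides with topological continuity. No gaps.
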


\bibliographystyle{plain}

\begin{thebibliography}{1}

\bibitem{Erne09}
Marcel Ern{\'e}.
\newblock Closure.
\newblock In {\em Beyond topology}, volume 486 of {\em Contemp. Math.}, pages
  163--238. Amer. Math. Soc., Providence, RI, 2009.

\bibitem{Erne93}
Marcel Ern{\'e}, J\"urgen Koslowski, Austin Melton, and George~E. Strecker.
\newblock A primer on {G}alois connections.
\newblock In {\em Papers on general topology and applications (Madison, WI,
  1991)}, volume 704, pages 103--125, 1993.

\bibitem{Ore44}
Oystein Ore.
\newblock Galois connexions.
\newblock {\em Trans. Amer. Math. Soc.}, 55:493--513, 1944.

\end{thebibliography}

\def\cprime{$'$} \def\cprime{$'$} \def\cprime{$'$} \def\cprime{$'$}
  \def\ocirc#1{\ifmmode\setbox0=\hbox{$#1$}\dimen0=\ht0 \advance\dimen0
  by1pt\rlap{\hbox to\wd0{\hss\raise\dimen0
  \hbox{\hskip.2em$\scriptscriptstyle\circ$}\hss}}#1\else {\accent"17 #1}\fi}
  \def\ocirc#1{\ifmmode\setbox0=\hbox{$#1$}\dimen0=\ht0 \advance\dimen0
  by1pt\rlap{\hbox to\wd0{\hss\raise\dimen0
  \hbox{\hskip.2em$\scriptscriptstyle\circ$}\hss}}#1\else {\accent"17 #1}\fi}

\end{document}